\newcommand{\edit}[1]{{\color{black}{#1}}}
\title{\LARGE \bf
Convergence of Iterative Quadratic Programming for Robust Fixed-Endpoint Transfer of Bilinear Systems
} 
\author{Luke S. Baker$^1$, Andre Luiz P. de Lima$^2$, Anatoly Zlotnik$^3$, Jr-Shin Li$^2$, and Michael J. Martin$^4$
\thanks{This project was supported by the LDRD program and the Center for Nonlinear Studies at Los Alamos National Laboratory.  Research conducted at Los Alamos National Laboratory is done under the auspices of the National Nuclear Security Administration of the U.S. Department of Energy under Contract No. 89233218CNA000001. Report No. LA-UR-24-22511.}
\thanks{$^1$lsbaker@lanl.gov, \,\,  Center for Nonlinear Studies, Theoretical Division, Los Alamos National Laboratory, Los Alamos, NM, USA 87545}
\thanks{$^2$\{a.delima,jsli\}@wustl.edu, \,\, Electrical \& Systems Engineering Department, Washington University in St. Louis, St. Louis,
MO, USA, 63130}
\thanks{$^3$azlotnik@lanl.gov, \,\, Applied Mathematics \& Plasma Physics, Los Alamos National Laboratory, Los Alamos, NM, USA 87545}
\thanks{$^4$mmartin@lanl.gov, \,\, Quantum Technologies Team, Los Alamos National Laboratory, Los Alamos, NM, USA 87545}}
\newtheorem{thm}{Theorem}
\newtheorem{prop}[thm]{Proposition}
\begin{document}

\maketitle
\thispagestyle{empty}
\pagestyle{empty}

\begin{abstract}
We present a computational method for open-loop minimum-norm control synthesis for fixed-endpoint transfer of bilinear ensemble systems that are indexed by two continuously varying parameters.  We suppose that one ensemble parameter scales the homogeneous, linear part of the dynamics, and the second parameter scales the effect of the applied control inputs on the inhomogeneous, bilinear dynamics.  This class of dynamical systems is motivated by robust quantum control pulse synthesis, where the ensemble parameters correspond to uncertainty in the free Hamiltonian and inhomogeneity in the control Hamiltonian, respectively.  Our computational method is based on polynomial approximation of the ensemble state in parameter space and discretization of the evolution equations in the time domain using a product of matrix exponentials corresponding to zero-order hold controls over the time intervals.  The dynamics are successively linearized about control and trajectory iterates to formulate a sequence of quadratic programs for computing perturbations to the control that successively improve the objective until the iteration converges.  We use a two-stage computation to first ensure transfer to the desired terminal state, and then minimize the norm of the control function.  The method is demonstrated for the canonical uniform transfer problem for the Bloch system that appears in nuclear magnetic resonance, as well as the matter-wave splitting problem for the Raman-Nath system that appears in ultra-cold atom interferometry.
\end{abstract}

\section{Introduction} \label{sec:intro}



The synthesis of open-loop optimal controls for bilinear dynamical systems has been studied for decades \cite{aganovic1994successive}.  The bilinear dynamics in this setting are characterized by a system of linear ordinary differential equations (ODEs), where some coefficients are time-varying control functions for which minimal energy candidates are desired.  The so-called bilinear-quadratic problems that result have been addressed by iterative feedback control methods \cite{hofer1988iterative}, and the approach has been extended to open loop controls for fixed-endpoint state transfers \cite{wang2017fixed}.  In practice, it is of interest to synthesize controls that are robust or insensitive to uncertainty or variation in system parameters, and this leads to infinite-dimensional systems \cite{beauchard2010controllability}.  Because state feedback is impractical or unavailable in this setting, open-loop controls are sought.  Control robustness is typically understood as a uniform state transfer effect on all dynamical units in an ensemble or collection of structurally-similar systems indexed by parameters varying on a compact set \cite{wang2017fixed, zhang2019analyzing}.  The desired control solution will steer the entire ensemble from the initial state to within an allowable distance from the desired target state.  Rigorous definitions and conditions under which ensemble controllability is assured have been established for ensemble systems that have certain bilinear structures \cite{li2009ensemble, zhang2019analyzing, li2020separating}.


Interest in control synthesis methods for fixed-endpoint transfers of bilinear ensembles has been driven over the past decades by problems related to quantum control applications \cite{li2011optimal}. Several approaches to solve the associated optimal control problem (OCP) employ fixed-point iteration directly either using linearization and approximation by Freholm operators \cite{zlotnik2012iterative} or by solving the quadratic-bilinear Riccati problem \cite{wang2017fixed}.  A variety of methods have been proposed to approximate infinite-dimensional ensembles using finite-dimensional representations, which typically involves spectral approximation \cite{ruths2011multidimensional}.  A promising recent concept involves so-called polynomial moments \cite{narayanan2020moment}, in which the system dynamics are represented in ensemble space over a basis of orthogonal functions.  Moment dynamical system representations were recently used for control synthesis in bilinear systems that appear in quantum applications \cite{ning2022nmr}.  


There are significant trade-offs between accurate representation of system dynamics and scale of the computational representation when implementing optimal controls using polynomial moment dynamics.  Depending on the truncation of the polynomial order, error tolerances, and time horizon, the resulting nonlinear program that discretizes the OCP can require complicated representations and very large numbers of optimization variables \cite{delima2023optimal}.  A promising recent approach to optimal control syntheses for nonlinear systems subject to constraints is to apply iterative quadratic programming to a sequence of linear approximations to the dynamics that are locally updated at each iteration \cite{vu2020iterative}. 


In this study, we develop a computational method for open-loop minimum-norm control synthesis for fixed-endpoint transfer of a class of bilinear ensemble systems that are indexed by two continuously varying parameters, subject to constraints on the controls.  We suppose that one ensemble parameter scales the homogeneous, linear part of the dynamics, and the second parameter scales the effect of the applied control inputs on the inhomogeneous, bilinear dynamics.  The class of systems with this structure can be applied to model a broad range of phenomena in the control of quantum and robotic systems \cite{altafini2012modeling,becker2012approximate}.  We examine in particular aspects of the linearization and discretization that promote computational scalability of the numerical algorithm.  We show that the order in which linearization and discretization are applied to the bilinear system can result in different approximations of the ensemble trajectory, so these operations are not in general commutative.  These results are in agreement with prior studies on dynamical systems \cite{grammont2014commute, breda2016pseudospectral}, which show that such commutation and approximation quality depend on the structure of the system and the discretization method. In addition to characterizing the discretization that leads to the best approximation, we also prove that linearization and discretization operations commute in the limit of numerical endpoint quadrature.  Finally, we demonstrate the generality of the method through computational experiments that involve two bilinear systems that arise in quantum control.


The rest of this paper is organized as follows.  Minimal energy control of a collection of bilinear dynamical systems is formulated in Section \ref{sec:robust_control}, and the reduction to a finite-dimensional system using the method of moments is presented there as well. Section \ref{sec:linearization_discretization} provides details of linearization and discretization of the reduced dynamical system.  Section \ref{sec:sqp} presents the iterative quadratic program used to compute the minimal energy control function.  Results of the control design are demonstrated in Section \ref{sec:examples} for numerical applications in nuclear magnetic resonance and ultra-cold atom interferometry.  Concluding remarks and an outlook for further development of the control algorithm are presented in Section \ref{sec:conc}.

\section{Robust Optimal State Transfer for a Continuum of Bilinear Systems} \label{sec:robust_control}

We formulate an OCP for a class of bilinear systems with dynamics that are affected by two parameters that vary over compact intervals.

\subsection{Bilinear Ensemble System} \label{sec:bilinear_system}

We consider an uncountable collection of structurally-identical bilinear dynamical systems of the form
\begin{equation} \label{eq:bilinear_ensemble}
        \dot X(t;\alpha,\beta)=\alpha \mathcal A X(t;\alpha,\beta)+\beta\sum_{i=1}^m U_i(t)\mathcal B_i  X(t;\alpha,\beta),
\end{equation}
where $U_i(t)\in [U_{\min}, U_{\max}]$ ($i=1,\dots,m$) represent control input functions and $X(t;\alpha,\beta)\in \mathbb{R}^n$ represents the state of the ensemble of bilinear systems indexed by parameters $\alpha\in [\alpha_{\min},\alpha_{\max}]$ and $\beta\in [\beta_{\min},\beta_{\max}]$ that affect the evolution of individual dynamical units in the ensemble.  The constant matrix $\mathcal A \in\mathbb{R}^{n\times n}$ characterizes the homogeneous part of the state dynamics, and each $\mathcal B_i\in\mathbb{R}^{n\times n}$ characterizes the influence of input $U_i$ on the state evolution for each $i=1,\dots,m$.  We refer to the parameterized collection of bilinear systems and the associated collection of indexed states as the ensemble system and the ensemble state, respectively.  For each fixed pair of parameters $\alpha$ and 
$\beta$, the above equation describes the time-evolution of the associated state $X(t;\alpha,\beta)$ as it evolves under the influence of the control function $U_i(t)$ ($i=1,\dots,m$).  The parameters $\alpha$ and $\beta$ are used to represent intrinsic system modeling uncertainty and inhomogeneity in applied control actuations, respectively.  This class of bilinear ensemble systems can be used to broadly represent a variety of quantum dynamical phenomena and associated control systems \cite{altafini2012modeling}.

\subsection{Optimal Control Problem} \label{sec:ocp}

Given a specified finite time $T$, we seek a single open-loop control solution of minimal energy that steers the ensemble state from uniform initial state $X_0\in \mathbb{R}^n$ to uniform target terminal state $X_T\in \mathbb{R}^n$ during the time interval $[0,T]$.  These endpoint conditions take the form
\begin{subequations}\label{eq:endpoint_equalities}
\begin{align} 
\!\!\!\!X(0;\alpha,\beta)&\!=\!X_0, \, \forall\alpha \!\in\! [\alpha_{\min},\alpha_{\max}], \, \forall\beta \!\in\! [\beta_{\min},\beta_{\max}], \! \label{eq:endpoint_equalities_init}\\
\!\!\!\!X(T;\alpha,\beta)&\!=\!X_T, \,  \forall\alpha \!\in\! [\alpha_{\min},\alpha_{\max}], \,  \forall\beta \!\in\! [\beta_{\min},\beta_{\max}]. \!\! \label{eq:endpoint_equalities_term}
\end{align}
\end{subequations}
\edit{Although the initial and target states are assumed to be independent of the parameters $\alpha$ and $\beta$, the setting may be extended to selective excitation in which distinct initial and target states could be associated to disjoint subsets of the parameter space \cite{pauly1991parameter}, i.e. $X_0$ and $X_T$ could depend on $\alpha$ and $\beta$ \cite{ning2022nmr}}.  We further suppose that control inputs are constrained by application requirements for all $i=1,\dots,m$ according to the inequalities
\begin{subequations} \label{eq:control_inequalities}
\begin{align} 
U_{\min}&\le U_i(t)\le U_{\max}, \,\, &\forall t\in[0,T], \\ 
\Delta U_{\min}&\le \dot U_i(t)\le \Delta U_{\max},  \,\, &\forall t\in[0,T],
\end{align}
\end{subequations}
where the amplitude constraint bound values $U_{\min}$ and $U_{\max}$  and the derivative limits $\Delta U_{\min}$ and $\Delta U_{\max}$ are problem parameters. The objective function for the variational minimization is the energy of the applied control defined by
\begin{equation} \label{eq:ocp_objective}
\min_{\{U_i(t)\}_{i=1}^m}    \sum_{i=1}^m\int_0^T\|U_i(t)\|^2dt.
\end{equation}
The notation $\|x\|^2=x'x$ indicates the squared Euclidean norm of a vector $x$, where $x'$ denotes the transpose of $x$.   The objective in equation \eqref{eq:ocp_objective} is minimized subject to the dynamic constraints  \eqref{eq:bilinear_ensemble}, the initial and terminal conditions in equations \eqref{eq:endpoint_equalities}, and the control amplitude and derivative constraints in equations \eqref{eq:control_inequalities}.  In our computational implementation, we explicitly enforce the initial state condition $X(0;\alpha,\beta)=X_0$ as defined in equation \eqref{eq:endpoint_equalities_init}, and relax the terminal state condition \eqref{eq:endpoint_equalities_term} to the inequality  \edit{
\begin{align} \label{eq:endpoint_equalities_term_relaxed}
\|P\left(X(T;\alpha,\beta)-X_T\right)\|\le \epsilon, 
\end{align}
where $\epsilon$ is a positive error tolerance.  The above inequality introduces a matrix $P$, which may represent the identity or a projection matrix onto a lower-dimensional subspace. This degree of flexibility provides a framework to maximize expectation values or probabilities for problems governed by quantum mechanical systems in which the components of the state vectors are complex-valued time-varying functions.}


\subsection{Spectral Approximation in Parameter Space by Polynomial Moments} \label{sec:moments}

We develop a numerical approximation method to represent the uncountable parameter space using a finite-dimensional polynomial moment expansion.  Rather than direct sampling of the parameter space, we consider a superposition of the ensemble state onto orthogonal basis functions over the parameter domain $[\alpha_{\min},\alpha_{\max}] \times [\beta_{\min},\beta_{\max}]$, and then truncate the series to obtain a finite approximation.  We employ Legendre polynomials as the orthogonal basis, following a foundational study on ensemble dynamics \cite{zeng2016moment}.  First, we transform the two parameter intervals $[\alpha_{\min},\alpha_{\max}]$ and $[\beta_{\min},\beta_{\max}]$ to the interval $[-1,1]$ on which Legendre polynomials are defined.  The transformations are defined as
\begin{equation} \label{eq:domain_transformations}
   \alpha(a)=\underline \alpha a+\overline \alpha, 
   \qquad  \beta(b)=\underline \beta b+\overline \beta, 
\end{equation}
in which we use the notation $\overline \gamma=(\gamma_{\max}+ \gamma_{\min})/2$ and $\underline \gamma=(\gamma_{\max}- \gamma_{\min})/2$ where $\gamma$ represents one of $\alpha$ or $\beta$.
Observe that $\alpha(-1)=\alpha_{\min}$, $\alpha(1)=\alpha_{\max}$, $\beta(-1)=\beta_{\min}$, and $\beta(1)=\beta_{\max}$.  Define the normalized Legendre polynomial of degree $k$ as a function of the variable $\gamma\in [-1,1]$ by
\begin{eqnarray} \label{eq:normalized_Legendre}
    L_k(\gamma)=\frac{\sqrt{2k+1}}{\sqrt{2}2^{k}k!}{\frac {d^{k}}{d\gamma^{k}}}(\gamma^{2}-1)^{k}.
\end{eqnarray} 
The functions in equation \eqref{eq:normalized_Legendre} satisfy the recurrence relation
\begin{equation} \label{eq:Legendre_recurrence}
    \gamma L_k(\gamma)=c_{k-1}L_{k-1}(\gamma)+c_kL_{k+1}(\gamma),
\end{equation}
where $c_k=(k+1)/{\sqrt{(2k+3)(2k+1)}}$. We assume that $X(t,\alpha(a),\beta(b))$ is square-integrable over $a,b\in [-1,1]$ for all $t\in [0,T]$, and that $\dot{X}(t,\alpha(a),\beta(b))$ is continuous for $t\in [0,T]$ and $a,b\in [-1,1]$. Using the completeness and orthonormality of the normalized Legendre polynomials on the interval $[-1,1]$, we expand the ensemble state as
\begin{equation}
    X(t;\alpha(a),\beta(b))=\sum_{p,q=0}^{\infty} x_{p,q}(t)L_p(a)L_q(b),
\end{equation}
where the expansion coefficients are 
\begin{equation}
    x_{p,q}(t)=\int_{-1}^1 \int_{-1}^{1}X(t;\alpha(a),\beta(b)) L_p(a)L_q(b) da db.
\end{equation}
By truncating the series, we obtain a numerically tractable approximation given by
\begin{equation} \label{eq:truncated_series}
    X(t;\alpha(a),\beta(b))\approx\sum_{p=0}^{N_{\alpha}}\sum_{q=0}^{N_{\beta}}x_{p,q}(t)L_p(a)L_q(b),
\end{equation}
\edit{where $N_{\alpha}$ and $N_{\beta}$ are the maximum degrees of the Legendre polynomials defined on the respective parameter intervals.}
By the dominated convergence theorem and the recurrence relation \eqref{eq:Legendre_recurrence}, the dynamics of the coefficients are shown to satisfy the system of differential equations
\begin{IEEEeqnarray}{lll}
   \dot x_{p,q}(t)
    &=& \mathcal A\left(c_{p-1}\underline{\alpha}x_{p-1,q}(t)+\overline{\alpha}x_{p,q}(t)+c_{p}\underline{\alpha}x_{p+1,q}(t)\right) \nonumber \\
    && +\sum_{i=1}^m U_i(t)\mathcal B_i\left( c_{q-1}\underline{\beta}x_{p,q-1}(t)+\overline{\beta}x_{p,q}(t)\right. \nonumber \\
    && \qquad \qquad  \quad \left. +c_{q}\underline{\beta}x_{p,q+1}(t) \right), \quad \forall p,\,q. \label{eq:coefficient_dynamics}  
\end{IEEEeqnarray}
All terms of the form $x_{p,N_{\beta}+1}$ and $x_{N_{\alpha}+1,q}$ for $p=0,\dots,N_{\alpha}$ and $q=0,\dots,N_{\beta}$ are removed from the expressions in equation \eqref{eq:coefficient_dynamics}.  The initial and desired target states of the ensemble correspond uniquely to initial and target states of the expansion coefficients.  In particular, $x_{0,0}(0)=2X_0$ and $x_{0,0}(T)=2X_T$, whereas $x_{p,q}(0)$ and $x_{p,q}(T)$ are $n$-dimensional zero vectors for all $p,q \not=0$ because of the orthogonality of the Legendre polynomials and the independence of the initial and target states from the ensemble parameters. 

The above procedure reduces an uncountable collection \eqref{eq:bilinear_ensemble} of bilinear systems to an approximate finite-dimensional system \eqref{eq:coefficient_dynamics} of representative coefficients.  We can concatenate the dynamics of the ensemble state as represented by the coefficients by defining $x=[x_{0,0}',\dots, x_{0,N_{\beta}}',\dots,x_{N_{\alpha},0}',\dots,x_{N_{\alpha},N_{\beta}}']'$ and the $(N_{\gamma}+1)\times (N_{\gamma}+1)$ tri-diagonal symmetric matrices
\begin{IEEEeqnarray*}{lll}
    C_{\gamma}=\begin{bmatrix}
        \overline \gamma & c_0\underline \gamma &  &  & &    \\
        c_0\underline \gamma & \overline \gamma & c_1\underline \gamma &  & &  \\
         & c_1\underline \gamma & \overline \gamma &  & &   \\
         & & & \ddots & &  \\
         & & & & \overline \gamma &  c_{N_{\gamma}-1}\underline \gamma \\
         & & & & c_{N_{\gamma}-1}\underline \gamma &  \overline \gamma
    \end{bmatrix}
\end{IEEEeqnarray*}
for $\gamma=\alpha,\, \beta$. We also define the $n(N_{\alpha}+1)(N_{\beta}+1)$-dimensional vectors $x_0=[2X_0',0,\dots,0]'$ and $x_T=[2X_T',0,\dots,0]'$ to represent the initial and target states in the truncated polynomial coefficient space. With these definitions, the dynamics in terms of the coefficients as stated in equation \eqref{eq:coefficient_dynamics} may be written as
\begin{equation} \label{eq:bilinear_ODE}
        \dot x(t)= A x(t)+\sum_{i=1}^m U_i(t)B_i x(t),
\end{equation}
where the $n(N_{\alpha}+1)(N_{\beta}+1)\times n(N_{\alpha}+1)(N_{\beta}+1)$ matrices are defined by
\begin{eqnarray} \label{eq:bilinear_ODE_A_B}
    A= C_{\alpha} \otimes I_{N_{\beta}+1} \otimes \mathcal A, \qquad B_i=  I_{N_{\alpha}+1} \otimes C_{\beta} \otimes \mathcal B_i.
\end{eqnarray}
Here, $I_{M}$ represents the $M\times M$ identity matrix and $C\otimes D$ represents the Kronecker product of matrices $C$ and $D$.
Our subsequent exposition is done for the finite-dimensional system in equations \eqref{eq:bilinear_ODE}-\eqref{eq:bilinear_ODE_A_B}.  The initial and desired target states of this system are equal to $x_0$ and $x_T$, respectively, as defined above.

\section{Linearization and Time-Discretization} \label{sec:linearization_discretization}

The iterative optimization algorithm that we develop to solve the OCP defined in Section \ref{sec:ocp} requires linearization of the bilinear system dynamic constraints \eqref{eq:bilinear_ensemble} and a discrete-time representation.  In this section, we detail the linearization and discretization of the bilinear system in equations \eqref{eq:bilinear_ODE}-\eqref{eq:bilinear_ODE_A_B}.  The effect of the order in which linearization and discretization are applied to a dynamical system has been investigated and is generally found to be dependent on the system structure and the discretization method \cite{grammont2014commute, breda2016pseudospectral}. One of the results presented in this section verifies that the order in which linearization and exact discretization are performed gives rise to different expressions for the discrete-time linear approximation of the state trajectory.  Therefore, these operations do not commute, in general, when applied to a bilinear system of form \eqref{eq:bilinear_ensemble}.  However, we prove that these operations commute in an approximate sense and converge with finer discretization.  For both orderings, we consider a zero-order hold framework in which control variables are piece-wise constant over each specified time interval.

\subsection{Discretization Followed by Linearization} \label{sec:disc_then_lin}

The time interval $[0,T]$ is discretized into \edit{$K+1$ sampling times $t_0=0,\dots,t_K=T$.}  Under the assumption of zero-order hold, the bilinear system in equation \eqref{eq:bilinear_ODE} is equivalent to a linear time-invariant system over each sub-interval $[t_{k},t_{k+1}]$.  Thus the transition from $x(t_k)$ to $x(t_{k+1})$ is given by the matrix exponential expression
\begin{equation} \label{eq:transition}
    x(t_{k+1})=e^{\Delta t_k\left(A+\sum_{i=1}^m U_i(t_k)B_i\right)} x(t_k),
\end{equation}
where $\Delta t_k=t_{k+1}-t_k$.

Suppose that $\{\overline U_i(t_k)\}$ for $k=0,\ldots,K-1$ and $i=1,\ldots,m$ denotes a collection of nominal piece-wise constant controls used to advance a nominal state trajectory $\{\overline x(t_k)\}$ according to equation \eqref{eq:transition}.  The nominal state is defined to satisfy the initial condition $\overline x(0)=x_0$. Consider slightly perturbed piece-wise constant control inputs $\delta u_i(t_k)$ and the associated perturbed state $\delta x(t_k)$ of the bilinear system, so that $U_i(t_k)=\overline U_i(t_k)+\delta u_i(t_k)$ and $x(t_k)=\overline x(t_k)+\delta x(t_k)$. By regulating the norm of the perturbed control vector to be sufficiently small, as defined subsequently, we may consider the linear system approximation about the nominal control and state variables.

We proceed to linearize the discrete transition in equation \eqref{eq:transition}.  The matrix exponential associated with the updated control input is written explicitly as
\begin{IEEEeqnarray}{lll}
  \sum_{j=0}^{\infty}\frac{\Delta t_k^{j}}{j!} \left( A+\sum_{i=1}^m (\overline U_i(t_k)+\delta u_i(t_k)) B_i\right)^{j}.
\end{IEEEeqnarray}
Linearizing the above representation about $\delta u_i(t_k)=0$, over all $i=1,\dots,m$, gives the expression
\begin{IEEEeqnarray}{lll}
 \!\!\!\!\!\!\!\! \sum_{j=1}^{\infty}\frac{\Delta t_k^{j}}{(j-1)!} \left( A+\sum_{i=1}^m \overline U_i(t_k) B_i\right)^{\!\!j-1} \!\!\!\!\!\cdot \!\left( \sum_{i=1}^m  \delta u_i(t_k) B_i\right) \nonumber \\
\!\!\! =\!\Delta t_k\text{exp}\!\left(\!\Delta t_kA\!+\!\Delta t_k\!\sum_{i=1}^m \! U_i(t_k)B_i\!\right)\!\!\cdot\!\!\sum_{i=1}^m \! \delta u_i(t_k) B_i. \!\!
\end{IEEEeqnarray}
By linearizing equation \eqref{eq:transition}, we obtain the approximate dynamics of the perturbation as
\begin{equation} \label{eq:discrete_lin}
\delta x(t_{k+1})=\bm A(t_k)\delta x(t_k) +\bm B(t_k)\delta u(t_k),
\end{equation}
where $\delta u(t_k)=[\delta u_1(t_k),\dots,\delta u_m(t_k)]'$ and
\begin{IEEEeqnarray}{lll}
    \bm A(t_k)&=&\text{exp}\left(\Delta t_kA+\Delta t_k\sum_{i=1}^m \overline U_i(t_k)B_i\right), \label{eq:A}\\
    \bm B(t_k)&=&\Delta t_k\bm A(t_k) [B_1\overline x(t_k),\dots, B_m \overline x(t_k)].\label{eq:B}
\end{IEEEeqnarray}

\subsection{Linearization Followed by Discretization}
  
Let us reconsider the continuous-time bilinear system in equation \eqref{eq:bilinear_ODE}.  Linearizing in continuous-time about $\overline U(t)$ and $\overline x(t)$ results in
\begin{equation} \label{eq:lin_sys}
    \Delta\dot{x}(t)=\overline A(t)\Delta x(t)+\overline B(t)\delta u(t), 
\end{equation}
with the time-varying state and control matrices defined by
\begin{subequations}
\begin{align}
    &\overline A(t)=A+\sum_{i=1}^m \overline U_i(t)B_i, \\
    &\overline B(t)=[B_1\overline x(t),\dots,B_m\overline x(t)].
\end{align}
\end{subequations}
As before, the nominal state satisfies the initial condition $\overline x(0)=x_0$.  Under the assumption of zero-order hold, the above state matrix $\overline A(t)$ is time-invariant for $t\in [t_k,t_{k+1})$.  The transition from $\Delta x_k$ to $\Delta x_{k+1}$ is therefore given by
\begin{IEEEeqnarray}{lll}
    \Delta x_{k+1}=\bm A_k\Delta x_k+\int_{t_k}^{t_{k+1}}e^{(t_{k+1}-\tau)\overline A_k} \overline B(\tau)  d\tau  \delta u_k,\quad\label{eq:lin_discrete}
\end{IEEEeqnarray}
in which we denote the evaluation of a variable at time $t=t_k$ with a subscript of index $k$ for simplicity of exposition.  For example,  $\Delta x_k=\Delta x(t_k)$ and $\bm A_k=\bm A(t_k)$.

Equations \eqref{eq:discrete_lin} and \eqref{eq:lin_discrete} indicate that linearization and discretization of the bilinear system are not commutative operations, in general, even though both of the discrete transitions are computed exactly with closed form matrix exponential expressions.
We note that other methods of discretization may in fact commute with linearization.  For example, regardless of whether or not the controls are piecewise constant, the Euler discretization and linearization are commutative operations on the bilinear system.  We have the following result.

\begin{prop}
Suppose that the integration in equation \eqref{eq:lin_discrete} is evaluated using the left-endpoint quadrature method. Then $\delta x_k$ in equation \eqref{eq:discrete_lin} is equal to $\Delta x_k$ in equation \eqref{eq:lin_discrete} for all $k=0,\dots, K$.
\end{prop}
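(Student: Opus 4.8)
The plan is to substitute the left-endpoint quadrature rule into the convolution integral in \eqref{eq:lin_discrete}, simplify the result using the definitions \eqref{eq:A} and \eqref{eq:B} of $\bm A(t_k)$ and $\bm B(t_k)$, and observe that the transition so obtained coincides term-by-term with \eqref{eq:discrete_lin}. Once both recursions are seen to be generated by the same affine update and to start from the same value, the equality $\delta x_k = \Delta x_k$ follows by a short induction on $k$.

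Carrying this out: the left-endpoint rule replaces $\int_{t_k}^{t_{k+1}} g(\tau)\,d\tau$ by $\Delta t_k\, g(t_k)$, so the forced term in \eqref{eq:lin_discrete} becomes $\Delta t_k\, e^{(t_{k+1}-t_k)\overline A_k}\,\overline B(t_k)\,\delta u_k = \Delta t_k\, e^{\Delta t_k \overline A_k}\,\overline B_k\,\delta u_k$. Since $\overline A_k = A + \sum_{i=1}^m \overline U_i(t_k) B_i$, one has $e^{\Delta t_k \overline A_k} = \exp\bigl(\Delta t_k A + \Delta t_k \sum_{i=1}^m \overline U_i(t_k) B_i\bigr) = \bm A(t_k) = \bm A_k$, which is precisely \eqref{eq:A}; and $\overline B_k = [B_1\overline x(t_k),\dots,B_m\overline x(t_k)]$, so the forced term reduces to $\Delta t_k\, \bm A_k\,[B_1\overline x(t_k),\dots,B_m\overline x(t_k)]\,\delta u_k = \bm B(t_k)\,\delta u_k = \bm B_k\,\delta u_k$ by \eqref{eq:B}. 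Hence, under left-endpoint quadrature, \eqref{eq:lin_discrete} reads $\Delta x_{k+1} = \bm A_k \Delta x_k + \bm B_k \delta u_k$, which is identical to \eqref{eq:discrete_lin}.

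It then remains to match initial data and induct. Both perturbation recursions are initialized at zero, since the initial state $x_0$ is held fixed for the nominal as well as the perturbed trajectories; thus $\delta x_0 = \Delta x_0 = 0$. If $\delta x_k = \Delta x_k$ for some $k$, then applying the common update $z \mapsto \bm A_k z + \bm B_k \delta u_k$ to both sides yields $\delta x_{k+1} = \bm A_k \delta x_k + \bm B_k \delta u_k = \bm A_k \Delta x_k + \bm B_k \delta u_k = \Delta x_{k+1}$, so $\delta x_k = \Delta x_k$ for all $k = 0,\dots,K$. There is no genuine analytical obstacle: the only point needing care is recognizing that the exact homogeneous transition matrix $e^{\Delta t_k \overline A_k}$ appearing in the linearize-then-discretize scheme is literally the matrix $\bm A(t_k)$ of \eqref{eq:A} produced by the discretize-then-linearize scheme, and that the left-endpoint rule collapses the convolution integral to exactly the input matrix $\bm B_k$ of \eqref{eq:B}. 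The content of the proposition is this identification; the induction itself is routine.
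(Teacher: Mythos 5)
Your proposal is correct and follows essentially the same route as the paper's proof: substitute the left-endpoint rule to identify the forced term with $\bm B_k\,\delta u_k$ from equation \eqref{eq:B}, note that both recursions share the homogeneous matrix $\bm A_k$ and the initial condition $\delta x_0=\Delta x_0=0$, and conclude by induction. The only cosmetic difference is that the paper inducts on the error $\delta x_k-\Delta x_k$ rather than on the equality directly.
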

\begin{proof}
Applying the left-endpoint method to the integration in equation \eqref{eq:lin_discrete} results in
    \begin{IEEEeqnarray}{lrl}
        \int_{t_k}^{t_{k+1}}e^{(t_{k+1}-\tau)\overline A_k} \overline B(\tau)  d\tau \approx \Delta t_k e^{\Delta t_k\overline A_k}\overline B(t_k). \label{eq:left_endpoint_transition}
    \end{IEEEeqnarray}
The expression on the right-hand side of equation \eqref{eq:left_endpoint_transition} is the definition of $\bm B_k$ in equation \eqref{eq:B}.  Therefore, from the hypothesis of the proposition, the state and control matrices in equations \eqref{eq:discrete_lin} and \eqref{eq:lin_discrete} are equivalent.  Because $\delta u_k$ is the same control function used in both equations \eqref{eq:discrete_lin} and \eqref{eq:lin_discrete}, we have
    \begin{IEEEeqnarray}{lrl}
\delta x_{k+1}-\Delta x_{k+1}=\bm A_k(\delta x_{k}-\Delta x_{k}) \label{eq:error_evolution}
    \end{IEEEeqnarray}
for all $k=0,\dots,K-1$.  The initial condition of the nominal state vector translates to the initial conditions $\delta x_0=\Delta x_0=0$.  From equation \eqref{eq:error_evolution}, we have $\delta x_1-\Delta x_1=0$ or $\delta x_1=\Delta x_1$.  It follows by induction that $\delta x_k=\Delta x_k$ for $k=0,\dots,K$.
\end{proof}

\edit{Because the error resulting from left-endpoint integration is well-known to be bounded in proportion to $\Delta t_k^2$ \cite{ascher2011first}, the above result can be extended to show that the solutions of the two methods above converge pointwise to one another as $\Delta t_k$ approaches zero.} It follows that either of the two expressions in equations \eqref{eq:discrete_lin} or \eqref{eq:lin_discrete} may be approximated with the other if $\Delta t_k$ and $T$ are sufficiently small.  Moreover, although equation \eqref{eq:lin_discrete} reduces to equation \eqref{eq:discrete_lin} when approximate integration is performed, this does not necessarily imply that equation \eqref{eq:lin_discrete} is more accurate than equation \eqref{eq:discrete_lin}.  We arrive at this conclusion with Taylor's multivariate theorem \cite{apostol1967calculus}.  In particular, the exact transition $x_k$ provided by equation \eqref{eq:transition} and the approximate transition $\overline x_k+\delta x_k$ in equation \eqref{eq:discrete_lin} agree up to and including first order terms in both the state and control perturbation variables. This is generally not true for the transition $\overline x_k+\Delta x_k$ provided by equation \eqref{eq:lin_discrete}.  \edit{During preliminary computations of the examples described in Section \ref{sec:examples}, we observe numerical inaccuracies caused by linearizing before discretizing.  For certain problems, an error tolerance $\epsilon$ that is several orders of magnitude smaller can be achieved when discretizing before linearizing in contrast to the reverse order of these operations.} Because of the limited accuracy of the latter method, we employ the discrete linear system in equation \eqref{eq:discrete_lin}.

\section{Iterative Quadratic Program} \label{sec:sqp}

In this section, we describe an algorithm for solving the OCP formulated in Section \ref{sec:robust_control} using a two-stage iterative quadratic programming approach. We outline the algorithms here and refer the interested reader to a recent study for details on the convergence of iterative quadratic programs for nonlinear dynamic systems \cite{vu2023iterative}.  The first algorithm determines a control function that steers the ensemble from the uniform initial state to within a specified error of the target state, as given by equation \eqref{eq:endpoint_equalities_term_relaxed}.  The second algorithm is then applied to gradually adjust the steering control function to minimize the control energy objective in equation \eqref{eq:ocp_objective} while fixing the initial and terminal states achieved in the first stage.

\edit{The number of equality constraints in equation \eqref{eq:discrete_lin}, for $k=0,\dots,K-1$, is equal to $n(N_{\alpha}+1)(N_{\beta}+1)K$, and this quantity ranges from tens to hundreds of thousands for the examples we consider in Section \ref{sec:examples}. } Such a large number of equality constraints could be problematic even for efficient quadratic programming packages.  Fortunately, the problem can be simplified by recursively evolving the dynamics according to
\begin{IEEEeqnarray}{lll} 
    \delta x_1&=&\bm A_0\delta x_0+\bm B_0 \delta u_0, \nonumber \\
    \delta x_2&=&\bm A_1\bm A_0\delta x_0+\bm A_1\bm B_0\delta u_0+\bm B_1\delta u_1,  \nonumber \\
    &\vdots& \label{eq:evolution_recursion} \\
    \delta x_K&=&\prod_{k=0}^{K-1}\bm A_k \delta x_0+\sum_{k=0}^{K-1}\left(\prod_{j=k+1}^{K-1}\bm A_j\right)\bm B_k\delta u_k,  \nonumber
\end{IEEEeqnarray}
where we define $\prod_{j=K}^{K-1}\bm A_j=I_{n(N_{\alpha}+1)(N_{\beta}+1)}$.  It follows from $\delta x_0=0$ that the zero-input response term $\prod_{k=0}^{j}\bm A_k \delta x_0$ vanishes from the above sequence of equations.  Because we are concerned with steering the terminal state of the system, the only equation from the above sequence that requires consideration is the one that defines $\delta x_K$ in terms of the control variables. We define the $n(N_{\alpha}+1)(N_{\beta}+1)\times mK$ evolution matrix
\begin{eqnarray}
    H=\left[\bm A_{K-1}  \cdots  \bm A_1\bm B_0,\dots,\bm A_{K-1}\bm B_{K-2},\; \bm B_{K-1}\right], \label{eq:H}
\end{eqnarray}
so that $\delta x_K=H\delta u$, where $\delta u =[\delta u_0',\dots,\delta u_{K-1}']'$.  We are now in position to present the control algorithms.

Consider a nominal control vector $\overline U=[\overline U_0',\dots,\overline U_{K-1}']'$ and the evolution of the associated state of the bilinear system $\overline x=[\overline x_0',\dots,\overline x_K']'$ in equation \eqref{eq:transition}.  These vectors are used to define or update the matrices in equations \eqref{eq:A}-\eqref{eq:B} and \eqref{eq:H}.  The control perturbation vector $\delta u$ that will move $x_K$ closer to $x_T$ is constrained according to
\begin{subequations} \label{eq:control_inequalities_discrete}
\begin{align} 
U_{\min}&\le \overline{U}_k + \delta u_k\le U_{\max}, \,\, \forall k=0,\ldots,K-1, \\ 
\Delta U_{\min}&\le  \frac{\overline{U}_{k+1}+\delta u_{k+1}-\overline{U}_{k}-\delta u_{k}}{\Delta t_k}\le \Delta U_{\max}, \nonumber \\ & \qquad \qquad \qquad \qquad\,\, \forall k=0,\ldots,K-2,
\end{align}
\end{subequations}
following the OCP constraints \eqref{eq:control_inequalities}, and is determined by solving the quadratic program defined by
\begin{equation} \label{eq:ocp_discrete_min_term}
\begin{array}{ll}
\text{min}_{\delta u}   &\edit{\|P\left(H\delta u+\overline x_K-x_T\right)\|^2 +\lambda \|\Lambda\delta u\|^2,} \\
\text{s.t.}   &\text{Inequality constraints in Eqns.  \eqref{eq:control_inequalities_discrete}},
 \end{array}
\end{equation}
where $\Lambda= \text{diag}(\Delta t_0,\dots,\Delta t_{K-1})\otimes I_m$ and  $\lambda$ is a regulation parameter that is adjusted between iterations.  The penalty term weighted by $\lambda$ in the objective function serves to regulate the norm of the perturbed control vector to render linearization applicable.  The solution $\delta u$ is used to update the control function $\overline U:=\overline U+\delta u$, with which the associated evolution of the bilinear state $\overline x$ is simulated according to equation \eqref{eq:transition}.  The procedure is repeated until $\|P(\overline x_K-x_T)\|\le \epsilon$ or until $\|\Lambda\delta u\|\le \delta$, where $\delta$ is a positive threshold.  When any one of these two metrics are achieved, the updated vectors $\overline U$ and $\overline x$ are stored and the steering algorithm is terminated.  \edit{While the initialization of the nominal control input may be specified arbitrarily, a judicious selection may promote better convergence.}
Moreover, the regularization parameter is adjusted between iterations according to $\lambda=\lambda_0\|P(\overline x_K-x_T)\|^2$, where $\lambda_0$ is a positive constant.

The minimum energy control function is computed as follows.  First, the vectors $\overline U$ and $\overline x$ that result from the first stage and the associated matrix $H$ are used to initialize the energy-minimizing algorithm.  The resulting quadratic program is
\begin{equation} \label{eq:ocp_discrete_min_energy}
\begin{array}{ll}
\text{min}_{\delta u}   & \|\Lambda\left(\overline U+\delta u\right)\|^2 +\mu \|\Lambda\delta u\|^2, \\
\text{s.t.}   & \edit{PH\delta u = 0,} \\
              &\text{Inequality constraints in Eqns.  \eqref{eq:control_inequalities_discrete}},
 \end{array}
\end{equation}
where $\mu$ serves the same purpose as $\lambda$ does in the first stage.  \edit{Because the target state may not be exactly reachable, the first constraint in equation \eqref{eq:ocp_discrete_min_energy} requires the terminal state of the minimal energy iterative algorithm to remain at the terminal state achieved with the steering controller.} The regulation parameter $\mu$ is updated between iterations according to $\mu:=0.9\mu$ if $\|\Lambda\delta u\|\le 2\delta$.  The solution $\delta u$ is used to define the updated control function $\overline U:=\overline U+\delta u$, from which the updated state $\overline x$ is simulated according to equation \eqref{eq:transition}.  If $\|\Lambda\delta u\|\le \delta$ at any stage of the iteration, then the minimal energy algorithm is terminated.  Otherwise, the matrix $H$ in equation \eqref{eq:H} is updated using the vectors $\overline U$ and $\overline x$, and the process is repeated. 

\section{Computational Studies}  \label{sec:examples}

\begin{figure*}[t]
    \centering
    \begin{subfigure}[t]{0.5\textwidth}
        \centering
        \includegraphics[width=1\linewidth]{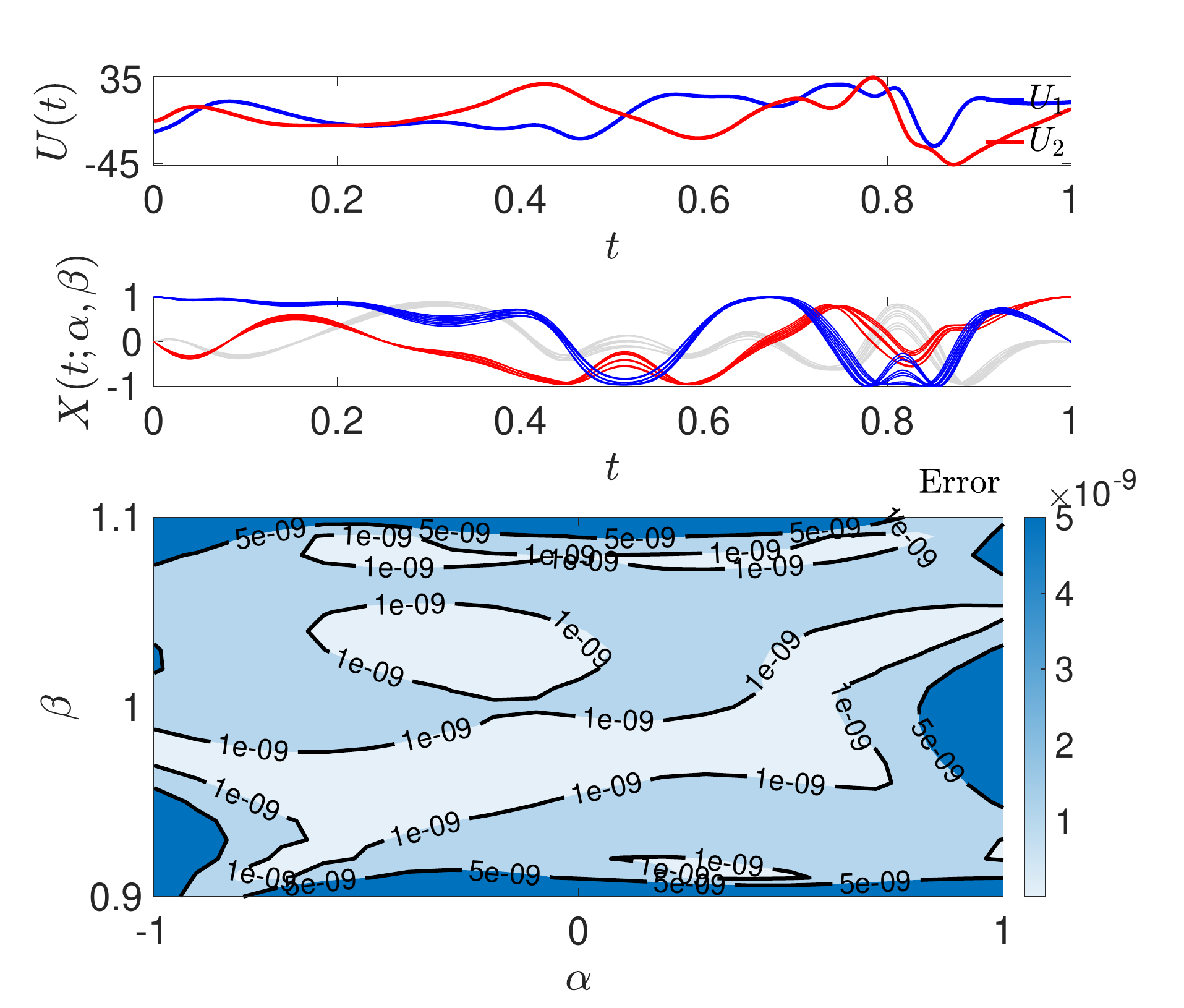}
    \end{subfigure}%
    ~ 
    \begin{subfigure}[t]{0.5\textwidth}
        \centering
        \includegraphics[width=1\linewidth]{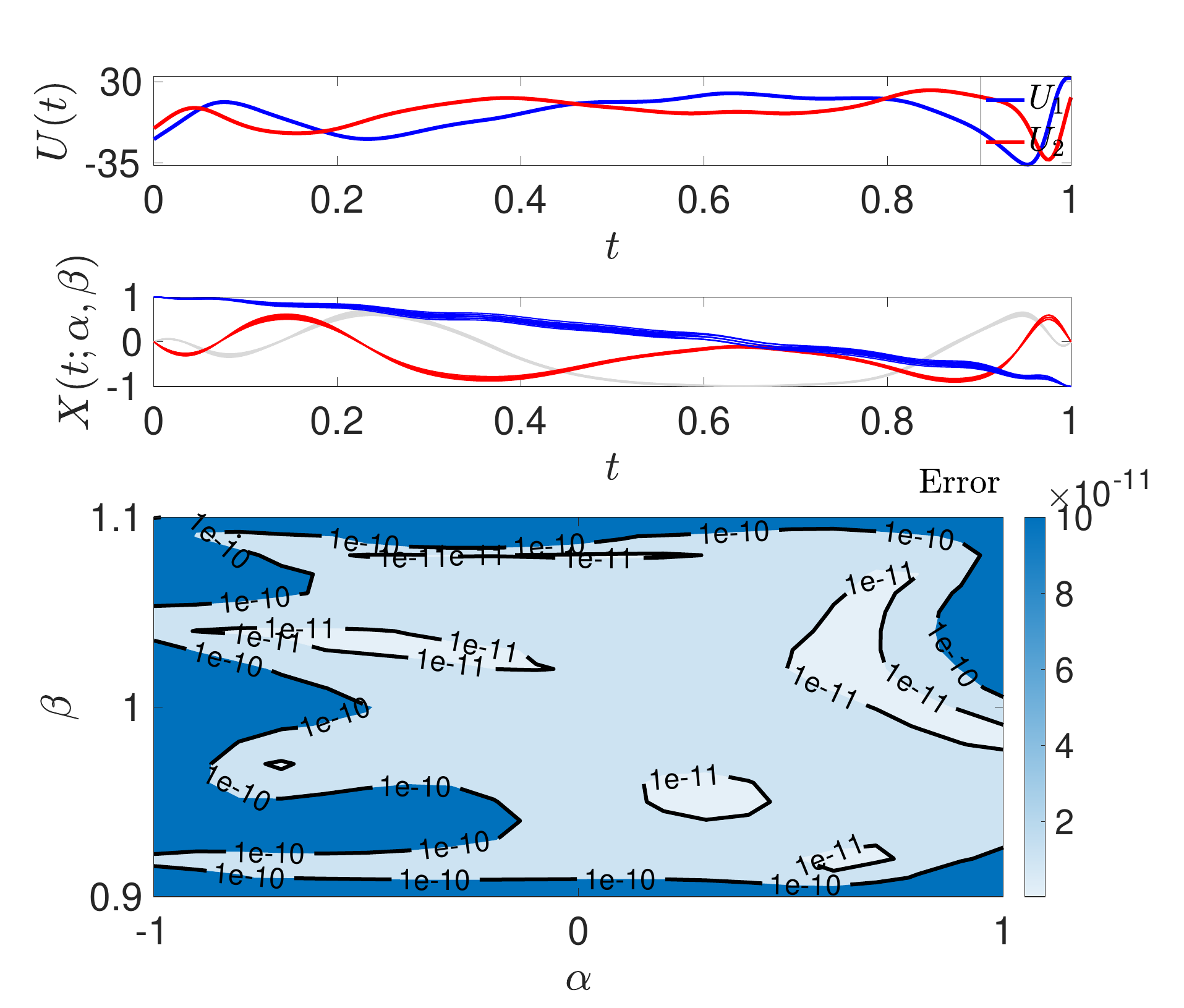}
    \end{subfigure}
\caption{\edit{State transfer of the Bloch system from $[0,0,1]'$ to $[1,0,0]'$ (left column) and from $[0,0,1]'$ to $[0,0,-1]'$ (right column).  From top to bottom are the controls, states, and the resulting terminal error contours obtained by repeated simulation over the parameter space.}}
\label{fig:bloch}
\end{figure*}

The performance of the iterative quadratic programming algorithm presented in Section \ref{sec:sqp} to solve the OCP described in Section \ref{sec:ocp} will be demonstrated for two examples that arise in quantum control applications.   The computations are performed in Matlab R2023a on a MacBook Pro with 32 GB of usable memory and an Apple M2 Max processing chip. The quadratic program at each stage of the iteration is implemented with the general-purpose Matlab function \verb+quadprog+ using the sparse-linear-convex algorithm and sparse linear algebra operations.  The CPU user load ranges between 12\% and 50\% of the maximum capability of the computer and the used memory is less than 3.5 GB.

\subsection{Nuclear Magnetic Resonance Spectroscopy} \label{sec:bloch}

Imaging modalities that take advantage of nuclear magnetic resonance (NMR) apply a strong constant magnetic field to a sample of nuclei and then apply radio-frequency (RF) fields in the transverse plane to manipulate the nuclear spins of the sample.  The spin dynamics are modeled using the Bloch equations \cite{pauly1991parameter,mabuchi2005principles, li2009ensemble}.  The system of Bloch equations in a rotating reference frame without relaxation \cite{torrey1956bloch, li2009ensemble} may be written as 
\begin{equation} \label{eq:bloch}
    \dot X = \begin{bmatrix}
        0 & -\alpha & \beta U_1 \\
        \alpha & 0 & -\beta U_2 \\
        -\beta U_1 & \beta U_2 & 0
    \end{bmatrix}X,
\end{equation}
where $X(t;\alpha,\beta)$ represents the bulk magnetization of the nuclei, and $U_1(t)$ and $U_2(t)$ represent the applied fields.  Variations in system parameters appear as dispersion in the intrinsic frequencies of the nuclei and the strength of the applied field \cite{levitt1986composite}.   Here, the ensemble is defined by the continuum of parameter values $\alpha\in [-1,1]$ and $\beta\in [0.9,1.1]$, which respectively represent variations in Larmor frequency and the amplitude of the applied field.  We seek controls $U_1(t)$ and $U_2(t)$ of minimal energy that steer the ensemble state from the zero-input equilibrium state $X_0=[0,0,1]'$ to the excited state $X_T=[1,0,0]'$.   \edit{This example has gained significant interest \cite{li2011optimal, zlotnik2012iterative, ning2022nmr} and will be referred to as example (a) in the following. We also present results for another example (b) in which the ensemble state is steered from one marginally stable equilibrium state, $X_0=[0,0,1]'$, to another given by $X_T=[0,0,-1]'$. 

Figure \ref{fig:bloch} displays the control functions, state vectors, and contours of the terminal error obtained by repeated simulation over the design region of parameters $\alpha$ and $\beta$ for example (a) in the left column and example (b) in the right column.  In addition to the parameters above, we use $T=1$, $K=300$, $N_{\alpha}=4$, $N_{\beta}=3$, and $\lambda_0=0.01$.  The total number of equations in  \eqref{eq:discrete_lin} is $n(N_{\alpha}+1)(N_{\beta}+1)K=18, 000$ and the size of $H$ in equation \eqref{eq:H} is $n(N_{\alpha}+1)(N_{\beta}+1)\times m(K-1)=60\times 598$.  Each problem terminates in about 3.3 minutes after reaching the maximum number of allowed iterations, which is specified at 800 for these examples.  It is evident from Figure \ref{fig:bloch} that the control algorithm successfully solves both problems with unprecedented error margins \cite{zlotnik2012iterative, ning2022nmr}.  Moreover, the controller is capable of achieving slightly smaller error margins, with smaller pulse amplitudes, for example (b) than example (a).}  

\subsection{Matter-Wave Splitting for Atom Interferometry} \label{sec:bec}

\begin{figure*}[t]
    \centering
    \begin{subfigure}[t]{0.5\textwidth}
        \centering
        \includegraphics[width=1\linewidth]{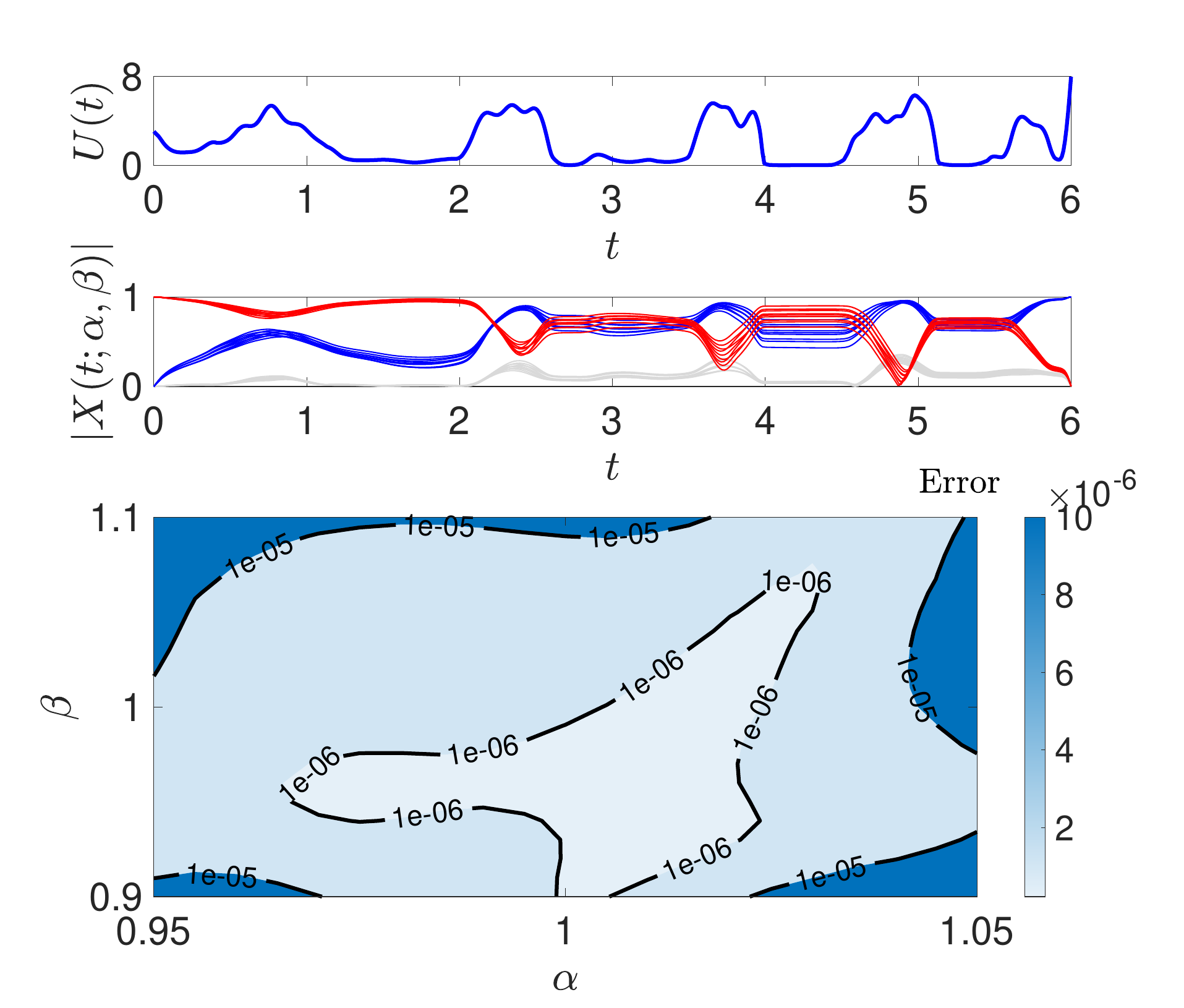}
    \end{subfigure}%
    ~ 
    \begin{subfigure}[t]{0.5\textwidth}
        \centering
        \includegraphics[width=1\linewidth]{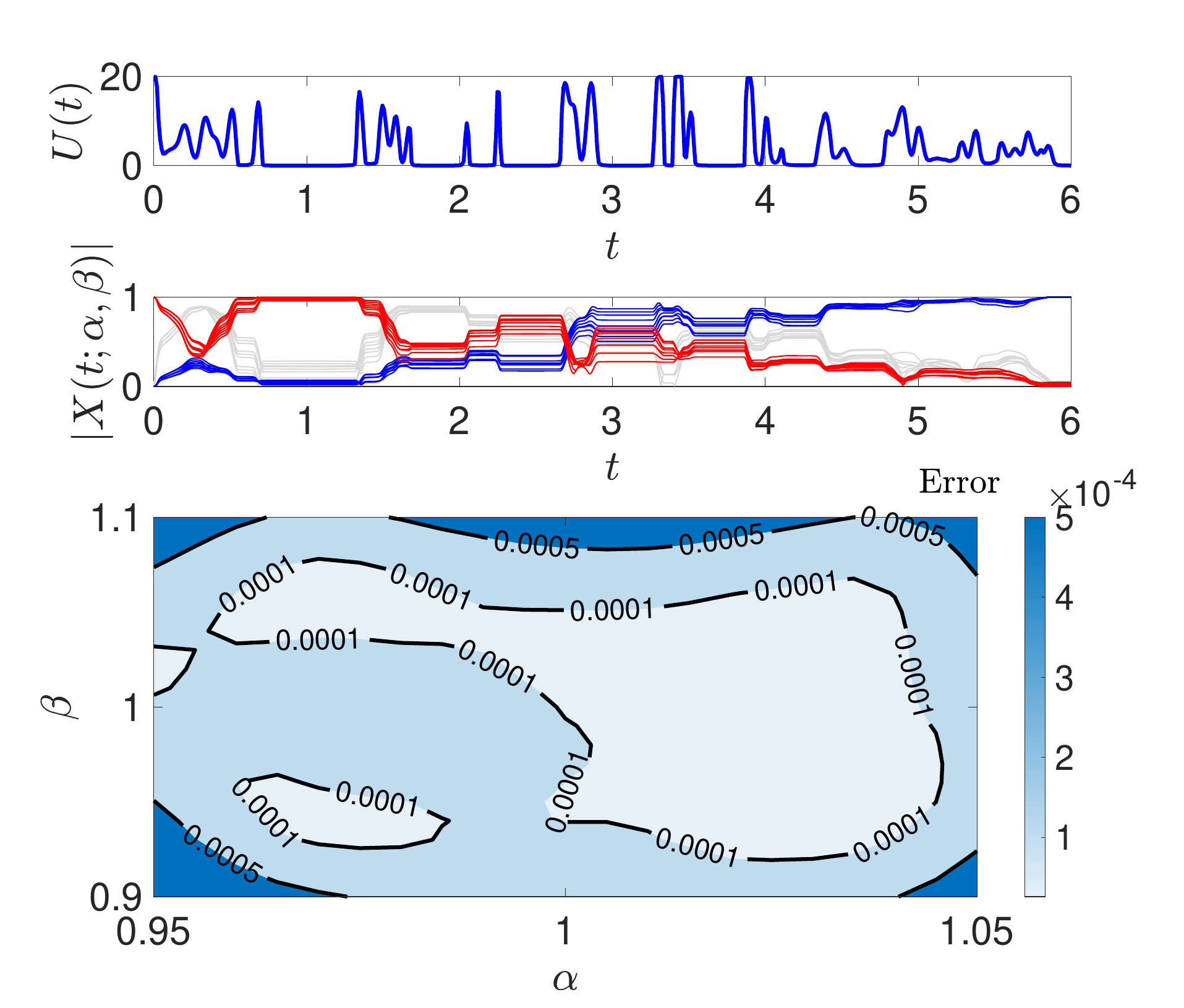}
    \end{subfigure}
\caption{\edit{State transfer of the Raman-Nath system from $|0\hbar k\rangle$ to $|\pm 2\hbar k\rangle$ (left column) and from $|0\hbar k\rangle$ to $|\pm 4\hbar k\rangle$ (right column). Absolute values correspond to norms of complex numbers.}}
\label{fig:split}
\end{figure*}

We consider a quantum control setting related to a proposed technique for interferometry, in which a dilute Bose-Einstein condensate (BEC) composed of atoms that are initially at rest is manipulated to elicit a diffraction pattern.  The relevant dynamics for the initial matter-wave splitting are modeled using the Raman-Nath equations \cite{wu2005splitting}.  Standing-wave optical pulses modulated by rectangular \cite{edwards2010momentum}, Gaussian \cite{muller2008atom}, and other transcendental envelopes \cite{cassidy2021improved} were designed to split the stationary condensate into a definite state or a superposition of high-order momentum states.  \edit{Recently, robust optimal control was applied to the Raman-Nath equations including compensation for 10\% uncertainty in the optical pulse intensity  \cite{delima2023optimal}.  We extend the results to compensate for 10\% uncertainty in light intensity together with 5\% uncertainty in photon recoil energy \cite{campbell2005photon}.}

The wave function $\Psi(t,x)$ is governed by the one-dimensional Schr{\"o}dinger equation \cite{wu2005splitting, cassidy2021improved, delima2023optimal},
\begin{equation}
    i \frac{\partial \Psi}{\partial t} = -\frac{\hbar}{2 m}\frac{\partial^2\Psi}{\partial x^2} +U(t)\cos^2(2k_0x)\Psi,
\end{equation}
where $U(t)$ is the amplitude of the light shift potential and $k_0$ is the vacuum wave number of the photons. In this example, we adhere to convention and use the symbols $i$, $m$, and $x$ to denote the imaginary unit, the mass of the BEC, and the independent spatial variable, respectively. \edit{As in prior studies \cite{wu2005splitting, cassidy2021improved}, we expand the wavefunction and write $\Psi(t,x)=\sum_{n}\int d\bm k C_{2n}(t;\bm k)e^{i(2nk_0+k)x}$,  where $k$ represents the wave number distribution.  The complex-valued coefficient $C_{2n}$ represents the probability amplitude of measuring the momentum state $| 2n\hbar k\rangle$.
By adopting common physical assumptions \cite{delima2023optimal,cassidy2021improved},} the dynamics of diffraction may be approximated with the vector $\mathcal X=[C_0,C_2,\dots,C_{2N'}]'$ of nonnegative momentum coefficients whose dynamics are given by
\begin{equation} \label{eq:bec}
    \dot{\mathcal X} =- i \left(\alpha \mathcal A_0+ U(t)\beta \mathcal B_0\right) \mathcal X,
\end{equation}
where the $(N'+1)\times (N'+1)$ matrices are defined by $\mathcal A_0= \omega_r\text{diag}(0,4,\dots,(2N')^2)$ and
\begin{IEEEeqnarray}{lll}
      \mathcal B_0= \frac{1}{2}\begin{bmatrix}
        0 & \sqrt{2} &  &     \\
       \sqrt{2}  & 0 & 1 &   \\
         & 1 & \ddots & 1   \\
         & & 1&  0 
    \end{bmatrix}. \label{eq:B0_matrix} 
\end{IEEEeqnarray}
\edit{We include the parameters $\alpha\in [0.95,1.05]$ and $\beta \in [0.9,1.1]$ to compensate for 5\% and 10\% uncertainty in photon recoil energy and light intensity, respectively.}
Finally, we expand the complex-valued state vector into its real and imaginary components and substitute the expression $\mathcal X=\text{Re}(\mathcal X)+ i\text{Im}(\mathcal X)$ into equation \eqref{eq:bec}.   By equating real and imaginary parts and defining $X=[\text{Re}(\mathcal X)',\text{Im}(\mathcal X)']'$, the equivalent real-valued bilinear ensemble system is
\begin{equation*}
\dot X(t)=\alpha \begin{bmatrix}
        0 & \mathcal A_0 \\
        -\mathcal A_0 & 0
    \end{bmatrix} X(t)+U(t)\beta \begin{bmatrix}
        0 & \mathcal B_0 \\
        -\mathcal B_0 & 0
    \end{bmatrix}  X(t).
\end{equation*}
The initial and desired target states are defined by $X_0=[1,0,\dots,0]'$ and  $X_T=[0,\dots,0,1,0,\dots,0]'$, where the only nonzero component of the target state appears in the $(n'+1)$-th entry. Here, $n'$ is an integer representative of the target momentum state $|\pm 2n'\hbar k\rangle$.

\edit{Figure \ref{fig:split} shows the control functions, state vectors, and error contours for $n'=1$ in the left column and $n'=2$ in the right column.  The BEC system is truncated at $N'=7$ for which the dimension of the ensemble state vector $X(t,\alpha,\beta)$ is $n=2(N'+1)=16$.  The other parameters used for the computation are $T=6$, $K=600$, $N_{\alpha}=6$, $N_{\beta}=3$, $\lambda_0=0.01$, $U_{\min}=0$, and $U_{\max}=10n'$.  For both matter-wave splitting examples, the total number of equations in \eqref{eq:discrete_lin} is $n(N_{\alpha}+1)(N_{\beta}+1)K=268,800$ and the size of $H$ in equation \eqref{eq:H} is $n(N_{\alpha}+1)(N_{\beta}+1)\times (K-1)=448\times 599$.  For each problem, the algorithm converges to a minimum energy control function that satisfies the imposed bounds in about 80 quadratic programming iterations during a total 20 minutes of computation time.  As for the NMR examples, Figure \ref{fig:split} demonstrates that the control function is robust over the design region of parameter values for matter-wave splitting, although the terminal error is in general greater because of greater complexity of the system dynamics.}

\section{Conclusion} \label{sec:conc}
\vspace{-.5ex}

We have designed a computational method for open-loop minimum-norm control synthesis for fixed-endpoint transfer of bilinear ensemble systems that are indexed by two continuously varying parameters.  The ensemble state is approximated using a truncated basis of Legendre polynomials.  The dynamics are linearized at each stage of the iteration about control and state trajectories to formulate a sequence of quadratic programs for computing perturbations to the control that successively improve the objective until convergence.  We show that the approximation quality depends on the order in which linearization and exact discretization are performed.  In particular, we prove that the two orders of operations result in different systems that are approximately equivalent in the sense of numerical quadrature.

\edit{The developed two-stage interative quadratic programming algorithm for solving the formulated class of optimal control problems is demonstrated for the Bloch system that appears in nuclear magnetic resonance, as well as the Raman-Nath equations that appear in the beamsplitter process of atom interferometry.  For both magnetic resonance and atom interferometry, the control algorithm successfully converges to robust pulse designs that achieve the desired transfer of states with unprecedented fidelity over the specified uncertain parameter space.  Although the computation of trajectories and linear system approximations are performed without symbolic algebra, the matrices and time evolution are currently updated with a for-loop at each stage of the iteration.  This is generally not scalable in Matlab to even more complex systems that may exceed millions of constraints.  Future work can extend the iterative quadratic programming approach presented here to offer more computationally expedient and tractable formulations that involve purely matrix-vector operations, and may benefit from the use of high performance computing.}

\vspace{-1ex}
\linespread{1}
\bibliographystyle{unsrt}
\bibliography{references.bib}

\end{document}